\numberwithin{equation}{section} 
\newcommand{\fundsolssralpha}{\varGamma^{\alpha}_{\text{\textnormal{\tiny{H}}}}}
\newcommand{\fundsolssrkrey}{\varGamma^{k,\rey}_{\text{\textnormal{\tiny{H}}}}}
\renewcommand{\Im}{\mathrm{Im}}
\newenvironment{pdeq}{ \left\{ \begin{aligned}}{\end{aligned}\right.}
\newcommand{\np}[1]{(#1)}
\newcommand{\nb}[1]{[#1]}
\newcommand{\bp}[1]{\big(#1\big)}
\newcommand{\bb}[1]{\big[#1\big]}
\newcommand{\Bp}[1]{\bigg(#1\bigg)}
\newcommand{\BBp}[1]{\Bigg(#1\Bigg)}
\newcommand{\Bb}[1]{\bigg[#1\bigg]}
\newcommand{\calj}{{\mathcal J}}
\newcommand{\calk}{{\mathcal K}}
\newcommand{\calt}{{\mathcal T}}
\newcommand{\R}{\mathbb{R}}
\newcommand{\C}{\mathbb{C}}
\newcommand{\Z}{\mathbb{Z}}
\newcommand{\N}{\mathbb{N}}
\DeclareMathOperator{\e}{e}
\newcommand{\B}{B}
\DeclareMathOperator{\Div}{div}
\DeclareMathOperator{\impart}{Im}
\newcommand{\ra}{\rightarrow}
\newcommand{\set}[1]{\ensuremath{\{#1\}}}
\newcommand{\closure}[2]{\overline{#1}^{#2}}
\newcommand{\quotientmap}{\pi}
\newcommand{\grp}{G}
\newcommand{\dualgrp}{\widehat{G}}
\newcommand{\torus}{{\mathbb T}}
\newcommand{\idmatrix}{I}
\newcommand{\Rn}{{\R^n}}
\newcommand{\ddz}{\frac{{\mathrm d}}{{\mathrm d}z}}
\newcommand{\grad}{\nabla}
\newcommand{\dx}{{\mathrm d}x}
\newcommand{\dsy}{{\mathrm d}(s,y)}
\newcommand{\dr}{{\mathrm d}r}
\newcommand{\ds}{{\mathrm d}s}
\newcommand{\dt}{{\mathrm d}t}
\newcommand{\dy}{{\mathrm d}y}
\newcommand{\dxi}{{\mathrm d}\xi}
\newcommand{\dyone}{{\mathrm d}y_1}
\newcommand{\SR}{\mathscr{S}}
\newcommand{\TDR}{\mathscr{S^\prime}}
\newcommand{\FT}{\mathscr{F}}
\newcommand{\iFT}{\mathscr{F}^{-1}}
\newcommand{\riesztrans}{\mathfrak{R}}
\newcommand{\Mmultiplier}{M}
\newcommand{\hankelone}[1]{H^{(1)}_{#1}}
\newcommand{\norm}[1]{\lVert#1\rVert}
\newcommand{\snorm}[1]{{\lvert #1 \rvert}}
\newcommand{\snorml}[1]{{\bigl\lvert #1 \big\rvert}}
\newcommand{\snormL}[1]{{\Bigl\lvert #1 \Big\rvert}}
\newcommand{\WSR}[2]{W^{#1,#2}}
\newcommand{\CR}[1]{C^{#1}}  
\newcommand{\LR}[1]{L^{#1}}
\newcommand{\LRloc}[1]{L^{#1}_{loc}} 
\newcommand{\CRci}{\CR \infty_0}
\newcommand{\CRc}[1]{\CR{#1}_0}
\newcommand{\wvel}{w}
\newcommand{\uvel}{u}
\newcommand{\upres}{\mathfrak{p}}
\newcommand{\fundsolvel}{\varGamma}
\newcommand{\fundsolpres}{\gamma}
\newcommand{\fundsolsteadystatevel}{\varGamma^{\textnormal{\tiny{}}}}
\newcommand{\fundsolsteadystatepres}{\gamma^{\textnormal{\tiny{}}}}
\newcommand{\fundsolstokesvel}{\varGamma^{\textnormal{\tiny{S}}}}
\newcommand{\fundsoloseenvel}{\varGamma^{\textnormal{\tiny{O}}}}
\newcommand{\fundsoltp}{\Phi}
\newcommand{\fundsoltpvel}{\varGamma^{\textnormal{\tiny{TP}}}}
\newcommand{\fundsoltppres}{\gamma^{\textnormal{\tiny{TP}}}}
\newcommand{\fundsolcompl}{\varGamma^\bot}
\newcommand{\fundsollaplace}{\varGamma_{{\textnormal{\tiny{L}}}}}
\newcommand{\tin}{\text{in }}
\newcommand{\tif}{\text{if }}
\newcommand{\half}{\frac{1}{2}}
\renewcommand{\epsilon}{\varepsilon}
\renewcommand{\phi}{\varphi}
\newcommand{\rey}{\lambda}
\newcommand{\tay}{\calt}
\newcommand{\per}{\tay}
\newcommand{\perf}{\frac{2\pi}{\tay}}
\newcommand{\bigo}{O}
\newcommand{\cutoff}{\chi}
\newcommand{\onedist}{1}
\newcommand{\change}[1]{}
\newcommand{\F}{F}
\newcommand{\newCCtr}[2][d]{
\newcounter{#2}\setcounter{#2}{0}
\expandafter\xdef\csname kyedtheconst#2\endcsname{#1}
}
\newcommand{\Cc}[2][nolabel]{
\stepcounter{#2}
\expandafter\ensuremath{\csname kyedtheconst#2\endcsname_{\arabic{#2}}}
\ifthenelse{\equal{#1}{nolabel}}
{}
{\expandafter\xdef\csname kyedconst#1\endcsname
{\expandafter\ensuremath{\csname kyedtheconst#2\endcsname_{\arabic{#2}}}}}
}
\newcommand{\Ccn}[2][nolabel]{
\expandafter\ensuremath{\csname kyedtheconst#2\endcsname}
\ifthenelse{\equal{#1}{nolabel}}
{}
{\expandafter\xdef\csname kyedconst#1\endcsname
{\expandafter\ensuremath{\csname kyedtheconst#2\endcsname}}}
}
\newcommand{\CcSetCtr}[2]{
\setcounter{#1}{#2}
}
\newcommand{\Cclast}[1]{
\expandafter\ensuremath{\csname kyedtheconst#1\endcsname_{\arabic{#1}}}
}
\newcommand{\Ccllast}[1]{
\addtocounter{#1}{-1}
\expandafter\ensuremath{\csname kyedtheconst#1\endcsname_{\arabic{#1}}}
\addtocounter{#1}{1}
}
\newcommand{\const}[1]{
\expandafter{\ifcsname kyedconst#1\endcsname
  \csname kyedconst#1\endcsname
\else
  \errmessage{Undefined Kyedconstant #1.}%
\fi}
}
\theoremstyle{plain}
\newtheorem{thm}{Theorem}[section]
\newtheorem{lem}[thm]{Lemma}
\theoremstyle{remark}
\newtheorem{rem}[thm]{Remark}
\begin{document}
\title{Estimates of time-periodic fundamental solutions to the linearized Navier-Stokes equations}

\author{
Thomas Eiter\\ 
Fachbereich Mathematik\\
Technische Universit\"at Darmstadt\\
Schlossgartenstr. 7, 64289 Darmstadt, Germany\\
Email: {\texttt{eiter@mathematik.tu-darmstadt.de}}
\and
Mads Kyed\\ 
Fachbereich Mathematik\\
Technische Universit\"at Darmstadt\\
Schlossgartenstr. 7, 64289 Darmstadt, Germany\\
Email: {\texttt{kyed@mathematik.tu-darmstadt.de}}
}

\date{\today}
\maketitle

\begin{abstract}
Fundamental solutions to the time-periodic Stokes and Oseen linearizations of the Navier-Stokes equations in dimension $n\geq 2$ are investigated. Integrability properties and pointwise estimates are established.
\end{abstract}

\noindent\textbf{MSC2010:} Primary 35Q30, 35B10, 35A08, 35E05, 76D07.\\
\noindent\textbf{Keywords:} Stokes, Oseen, Navier-Stokes, time-periodic, fundamental solution.

\newCCtr[C]{C}
\newCCtr[M]{M}
\newCCtr[\epsilon]{eps}
\CcSetCtr{eps}{-1}
\newCCtr[c]{c}
\let\oldproof\proof
\def\proof{\CcSetCtr{c}{-1}\oldproof}

\section{Introduction}

The concept of a time-periodic fundamental solution to the Stokes equations was introduced recently in \cite{Kyed_FundsolTPStokes2016}. 
In the following, we extend this concept to the time-periodic Oseen equations in arbitrary dimension $n\geq 2$. 
Moreover, we establish pointwise estimates and integrability properties for both the Stokes and Oseen fundamental solution. 
At the outset, the fundamental solutions are introduced as distributions on a Schwartz-Bruhat space.
The integrability properties enable us to identify these distributions as elements of an appropriate
$\LR{q}$ space. Consequently, convolutions with the fundamental solutions can be expressed in terms
of classical integrals. The pointwise estimates can then be used to investigate, for example, the asymptotic  behavior of time-periodic solutions to both the Stokes and Oseen equations.

The idea of investigating time-periodic problems in terms of fundamental solutions is new. Classically, Poincar\'e maps have been used instead.
Consider the Poincar\'e map that takes a state at time $0$ into the state at time $\per>0$ of a 
solution to the (linearized) Navier-Stokes initial-value problem. 
A fixed point of this mapping yields a $\per$-time-periodic solution. More precisely,
the $\per$-time-periodic solution is identified as the specific solution to the 
initial-value problem corresponding to the fixed point.
Working with such an indirect 
identification, one faces a number of limitations. It is, for example, difficult 
to derive information on the pointwise structure of the solution.
Instead, we propose to express a time-periodic solution in terms of a convolution integral 
with an appropriate fundamental solution. In order to exploit such a direct representation formula, integrability properties and pointwise 
estimates of the fundamental solution are needed.

In the following, we investigate linearizations of the time-periodic Navier-Stokes equations in $\R^n$ with $n\geq 2$.
The time-period $\per>0$ remains fixed.
Let $\rey\in\R$. The linearization 
\begin{align}\label{tplinns}
\begin{pdeq}
&\partial_t\uvel-\Delta\uvel + \rey\partial_{x_1}\uvel +\grad\upres = f && \tin\R\times\Rn, \\
&\Div\uvel =0 && \tin\R\times\Rn,\\
&\uvel(t,x) = \uvel(t+\per,x)  
\end{pdeq}
\end{align} 
is referred to as the time-periodic Stokes system if $\rey=0$, and as the time-periodic Oseen system if $\rey\neq 0$.  
Here $\uvel:\R\times\Rn\ra\Rn$ and $\upres:\R\times\Rn\ra\R$ denote the Eulerian velocity field and pressure term, respectively.  
Data $f:\R\times\Rn\ra\Rn$ with the same period, that is, $f(t,x) = f(t+\per,x)$, are considered. 
Moreover, $t\in\R$ and $x\in\R^n$ denote the time and spatial variable, respectively.

In order to define a fundamental solution to \eqref{tplinns}, we employ the approach from \cite{Kyed_FundsolTPStokes2016}
and reformulate \eqref{tplinns} as a system of partial differential equations 
on the locally compact abelian group $\grp\coloneqq\R/\per\Z \times \R^n$.
More specifically, we exploit that $\per$-time-periodic functions can naturally be identified with 
functions on the 
torus group $\torus\coloneqq\R/\per\Z$ in the time variable $t$.
In the setting of the Schwartz-Bruhat space $\SR(\grp)$ and corresponding space of tempered distributions
$\TDR(\grp)$, we can then define a fundamental solution $\fundsoltp$ to \eqref{tplinns} as a 
tensor-field
\begin{align}\label{tpFundsolForm}
\fundsoltp\coloneqq
\begin{pmatrix}
\fundsoltpvel_{11} & \ldots  & \fundsoltpvel_{1n} \\
\vdots & \ddots & \vdots\\
\fundsoltpvel_{n1} & \ldots  & \fundsoltpvel_{nn} \\
\fundsoltppres_{1} & \ldots  & \fundsoltppres_{n} 
\end{pmatrix}
\in \TDR(\grp)^{(n+1)\times n}
\end{align}
that satisfies\footnote{We make use of the Einstein summation convention and
implicitly sum over all repeated indices.}
\begin{align}\label{tpFundsolEq}
\begin{pdeq}
&\partial_t\fundsoltpvel_{ij}-\Delta\fundsoltpvel_{ij} + \rey\partial_{x_1}\fundsoltpvel_{ij} + \partial_i \fundsoltppres_j = \delta_{ij}\delta_\grp,  \\
&\partial_i\fundsoltpvel_{ij} =0
\end{pdeq}
\end{align} 
in the sense of $\TDR(\grp)$-distributions. 
Here, $\delta_{ij}$ and $\delta_\grp$ denote the Kronecker delta and delta distribution, respectively.
A solution to the time-periodic system \eqref{tplinns} is then given by 
\begin{align}\label{intro_ConvolutionWithTPFundsol}
\begin{pmatrix}
\uvel \\
\upres
\end{pmatrix} \coloneqq \fundsoltp * f, 
\end{align}
where the component-wise convolution is taken over the group $\grp$. 

In the following, we shall identify a fundamental solution $\fundsoltp$ to \eqref{tplinns} as the sum of a fundamental solution to 
the corresponding steady-state system
\begin{align}\label{SteadyStateFundEq}
\begin{pdeq}
&-\Delta\fundsolsteadystatevel_{ij} +\rey\partial_{x_1}\fundsolsteadystatevel_{ij}+ \partial_i \fundsolsteadystatepres_j = \delta_{ij}\delta_\Rn,  \\
&\partial_i\fundsolsteadystatevel_{ij} =0, 
\end{pdeq}
\end{align} 
and a second part, which we refer to as the \emph{purely periodic} part. 
Recall that in the Stokes case ($\rey=0$) a fundamental solution to \eqref{SteadyStateFundEq} is given by (see for example \cite[IV.2]{GaldiBookNew}) 
\begin{align}\label{SteadyStateStokesFundEq}
\begin{aligned}
&\fundsolstokesvel_{ij}(x)\coloneqq 
\begin{pdeq}
&\frac{1}{2\omega_n}\Bp{\delta_{ij}\log\bp{\snorm{x}^{-1}}+\frac{x_ix_j}{\snorm{x}^2} } && \text{if }n=2, \\
&\frac{1}{2\omega_n}\Bp{\delta_{ij}\frac{1}{n-2}\snorm{x}^{2-n}+\frac{x_ix_j}{\snorm{x}^n}} && \text{if }n\geq 3. 
\end{pdeq}
\end{aligned}
\end{align}
Here, $\omega_n$ denotes the surface area of the $(n-1)$-dimensional unit sphere in $\Rn$. 
In the Oseen case ($\rey\neq 0$), a fundamental solution to \eqref{SteadyStateFundEq} is given by (see for example \cite[VII.3]{GaldiBookNew}) 
\begin{align}\label{SteadyStateOseenFundEq}
&\fundsoloseenvel_{ij}(x) \coloneqq \frac{1}{\rey} \left( \delta_{ij} \Delta - \partial_{x_i} \partial_{x_j} \right) \int_0^{x_1} \left[ \fundsollaplace(y_1, x_2, \ldots, x_n) - \Psi(y_1,x_2, \ldots, x_n) \right] \dyone,
\end{align} 
where
\begin{align*}
\fundsollaplace(x)\coloneqq
\begin{pdeq}
&-\frac{1}{2\pi} \log\snorm{x} &&\tif n=2,\\
&\frac{1}{(n-2)\omega_n} \snorm{x}^{2-n} && \tif n>2,
\end{pdeq}
\end{align*}
is the fundamental solution to the Laplace equation $\Delta\fundsollaplace = \delta_{\R^n}$ in $\R^n$ and
\[
\Psi(x) \coloneqq - \frac{1}{2\pi} \left( \frac{\rey}{4\pi \snorm{x}} \right)^{\frac{n-2}{2}} K_{\frac{n-2}{2}}\left(\frac{\rey}{2}\snorm{x}\right) \e^{-\frac{\rey}{2}x_1}.
\]
Here $K_\nu$ denotes the modified Bessel function of the second kind.
In both the Stokes and Oseen case the pressure term in the fundamental solution is given by
\begin{align}\label{SteadyStateFundsolPressure}
\begin{aligned}
&\fundsolpres_{i}(x)\coloneqq \frac{1}{\omega_n}\frac{x_i}{\snorm{x}^n}.
\end{aligned}
\end{align}
In order to identify the second part of $\fundsoltp$, that is, the purely periodic part, we
utilize the Fourier transform $\FT_\grp$ on the group $\grp$. The fact that 
$\FT_\grp:\TDR(\grp)\ra\TDR(\dualgrp)$ is a homeomorphism allows us to express the purely periodic part 
in terms of a Fourier multiplier defined on the dual group $\dualgrp=\Z\times\R^n$.
Our main theorem reads:

\begin{thm}\label{MainThm}
Let $n\geq 2$ and $\lambda \in \R$. Put
\begin{align*}
\fundsolsteadystatevel\coloneqq 
\begin{pdeq}
&\fundsolstokesvel && \tif \rey=0 \quad \text{(Stokes case)},\\
&\fundsoloseenvel  && \tif \rey\neq0 \quad \text{(Oseen case)}.
\end{pdeq}
\end{align*}
Then the elements of $\TDR(\grp)$ given by
\begin{align}
&\fundsoltpvel \coloneqq \fundsolsteadystatevel\otimes \onedist_{\torus} + \fundsolcompl,\label{MainThm_decompVel}\\
&\fundsoltppres \coloneqq \fundsolsteadystatepres \otimes \delta_{\torus},\label{MainThm_decompPres}
\end{align}
with
\begin{align}\label{MainThm_deffundsolcompl}
\fundsolcompl \coloneqq \iFT_\grp\Bb{ \frac{1-\delta_\Z(k)}{\snorm{\xi}^2 + i(\perf k + \rey \xi_1)}\,\Bp{\idmatrix - \frac{\xi\otimes\xi}{\snorm{\xi}^2}}}\in\TDR(\grp)^{n\times n}
\end{align}
define a fundamental solution $\fundsoltp\in\TDR(\grp)^{(n+1)\times n}$ to \eqref{tpFundsolEq} of the form \eqref{tpFundsolForm} satisfying
\begin{align}
&\forall q\in\Bp{1,\frac{n+2}{n}}:\quad \fundsolcompl\in\LR{q}(\grp)^{n\times n},\label{MainThm_ComplSummability}\\
&\forall q\in\bigg[1,\frac{n+2}{n+1}\bigg):\quad \partial_j\fundsolcompl\in\LR{q}(\grp)^{n\times n}\quad(j=1,\ldots,n),\label{MainThm_ComplSummabilityGradient}\\
&\forall \alpha \in \N_0^n \ \forall r\in [1,\infty)\ \forall\epsilon>0\ \exists\Cc[MainThm_ComplPointwiseEstConstant]{C}>0\ \forall \snorm{x}\geq \epsilon:\  \norm{D_x^\alpha \fundsolcompl(\cdot,x)}_{\LR{r}(\torus)} \leq \frac{\const{MainThm_ComplPointwiseEstConstant}}{\snorm{x}^{n+\snorm{\alpha}}},\label{MainThm_ComplPointwiseEst}\\
&\forall q\in(1,\infty)\ \exists \Cc[MainThm_ConvlFundsolcomplLqEstConstant]{C}>0\ \forall \F\in\SR(\grp)^n:\  \norm{\fundsolcompl*\F}_{\WSR{1,2}{q}(\grp)} \leq \const{MainThm_ConvlFundsolcomplLqEstConstant}\,\norm{\F}_{\LR{q}(\grp)},  \label{MainThm_ConvlFundsolcomplLqEst}
\end{align}
where $\delta_\torus\in\TDR(\torus)$ denotes the delta distribution and $\onedist_\torus\in\TDR(\torus)$ the constant $1$ distribution.\footnote{We thank Tomoyuki Nakatsuka for pointing out that our proof of \eqref{MainThm_ComplPointwiseEst} also works for derivatives of arbitrary order.}
\end{thm}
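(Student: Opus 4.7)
My plan proceeds in three stages. First, I verify that $\fundsoltp$ is a fundamental solution by applying $\FT_\grp$. On $\dualgrp = \Z \times \R^n$ the equations \eqref{tpFundsolEq} reduce, via the Leray-Helmholtz decomposition in $\xi$, to
\[
\widehat{\fundsoltppres}_j = -\frac{i\xi_j}{\snorm{\xi}^2}, \qquad \widehat{\fundsoltpvel}_{ij} = \frac{1}{\snorm{\xi}^2 + i(\perf k + \lambda\xi_1)}\bp{\delta_{ij} - \xi_i\xi_j/\snorm{\xi}^2}.
\]
The pressure is independent of $k$, matching $\fundsolsteadystatepres \otimes \delta_\torus$ (see \eqref{SteadyStateFundsolPressure}); the velocity splits into its $k=0$ component, which agrees with the Fourier symbol of $\fundsolsteadystatevel$ by \eqref{SteadyStateFundEq}, and its $k \neq 0$ component, which is by definition $\widehat{\fundsolcompl}$.

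For the maximal-regularity bound \eqref{MainThm_ConvlFundsolcomplLqEst} I would invoke de Leeuw's transference theorem. The symbol in \eqref{MainThm_deffundsolcompl} is the restriction to $(\perf\Z \setminus \{0\}) \times \R^n$ of the full-space, time-non-periodic Stokes/Oseen resolvent symbol on $\R \times \R^n$, for which Solonnikov-type parabolic maximal regularity gives the $L^q \to W^{1,2}_q$ bound for $q \in (1,\infty)$. Transference then yields the analogous bound on $\torus \times \R^n$. The integrability assertions \eqref{MainThm_ComplSummability} and \eqref{MainThm_ComplSummabilityGradient} follow by combining the pointwise decay \eqref{MainThm_ComplPointwiseEst} on $\set{\snorm{x}\geq 1}$ with a near-origin $L^q$-bound obtained by Poisson-summing the whole-space heat-kernel representation; the exponents $(n+2)/n$ and $(n+2)/(n+1)$ are the standard parabolic scaling thresholds.

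The main obstacle is the pointwise decay \eqref{MainThm_ComplPointwiseEst}. I would write
\[
\fundsolcompl(t,x) = \iper \sum_{k \in \Z \setminus \{0\}} \e^{i\perf k t}\, M_k(x), \qquad M_k(x) = \iFT_{\R^n}\Bb{\frac{1}{\snorm{\xi}^2 + i(\perf k + \lambda\xi_1)}\Bp{I - \frac{\xi \otimes \xi}{\snorm{\xi}^2}}},
\]
and iterate integration by parts via $\snorm{x}^{2N}\e^{ix\cdot\xi} = (-\Delta_\xi)^N \e^{ix\cdot\xi}$ to transfer $N$ derivatives onto the symbol, gaining the factor $\snorm{x}^{-2N}$. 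Derivatives of the scalar resolvent factor are controlled by $(\snorm{\xi}^2 + \perf\snorm{k})^{-1-\text{order}/2}$, whereas derivatives of the projector contribute $\snorm{\xi}^{-\text{order}}$ singularities at $\xi=0$ that become non-integrable at high orders. Splitting the $\xi$-integration at $\snorm{\xi} \sim (\perf\snorm{k})^{1/2}$ — handling the low-$\xi$ ball by direct bounds (using the boundedness of the full symbol for $k \neq 0$) and the remaining region by integration by parts with a smooth cutoff near $\xi = 0$ — one obtains a pointwise bound of the form $\snorm{D^\alpha_x M_k(x)} \leq C_{\alpha,N,\epsilon} (\perf\snorm{k})^{-N} \snorm{x}^{-n-\snorm{\alpha}}$ for arbitrary $N$. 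Minkowski in $t$ then bounds $\norm{D_x^\alpha \fundsolcompl(\cdot,x)}_{\LR{r}(\torus)}$ by a convergent series in $k$. The delicate point throughout is the trade-off between the number of integrations by parts (needed for summability over $k$), the integrability loss at $\xi = 0$ from the projector, and uniformity in $\snorm{x} \geq \epsilon$.
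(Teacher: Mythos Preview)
Your verification that $\fundsoltp$ is a fundamental solution and your plan for \eqref{MainThm_ConvlFundsolcomplLqEst} via transference are essentially what the paper does. The substantive problem is with your argument for the pointwise estimate \eqref{MainThm_ComplPointwiseEst}, and this also contaminates your derivation of \eqref{MainThm_ComplSummability}--\eqref{MainThm_ComplSummabilityGradient}.

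The claimed bound $\snorm{D_x^\alpha M_k(x)} \leq C_{\alpha,N,\epsilon}\,(\perf\snorm{k})^{-N}\snorm{x}^{-n-\snorm{\alpha}}$ for arbitrary $N$ is false. For fixed $x\neq 0$ and large $\snorm{k}$ one has, writing $M_k = (\delta_{ij}\Delta-\partial_i\partial_j)[\fundsollaplace*\fundsolssrkrey]$ with $\fundsolssrkrey$ the Helmholtz fundamental solution at frequency $k$, that $\fundsolssrkrey$ concentrates near the origin with total mass $\int\fundsolssrkrey = (i\perf k)^{-1}$, so
\[
M_{k,ij}(x) \;=\; -\frac{1}{i\perf k}\,\partial_i\partial_j\fundsollaplace(x)\;+\;O\bp{\snorm{k}^{-2}},
\]
which is genuinely of order $\snorm{k}^{-1}$. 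Consequently your final step, ``Minkowski in $t$ then bounds $\norm{D_x^\alpha\fundsolcompl(\cdot,x)}_{\LR{r}(\torus)}$ by a convergent series in $k$'', fails: $\sum_{k\neq 0}\snorm{k}^{-1}$ diverges. The paper obtains exactly the $\snorm{k}^{-1}$ bound and then applies the Hausdorff--Young inequality ($r\geq 2$, $r'\in(1,2]$, so $\sum\snorm{k}^{-r'}<\infty$) rather than Minkowski.

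Your oscillatory-integral mechanism also does not produce the $\snorm{x}$-decay as described. Splitting at $\snorm{\xi}\sim(\perf\snorm{k})^{1/2}$ and using only boundedness of the symbol on the low-$\xi$ ball gives
\[
\int_{\snorm{\xi}\lesssim\snorm{k}^{1/2}} \snorm{\xi}^{\snorm{\alpha}}\,\snorm{m_k(\xi)}\,\dxi \;\lesssim\; \snorm{k}^{-1}\snorm{k}^{(n+\snorm{\alpha})/2},
\]
with no decay in $\snorm{x}$ whatsoever. A split at $\snorm{\xi}\sim\snorm{x}^{-1}$ would recover $\snorm{x}^{-n-\snorm{\alpha}}$ from the low-frequency ball, but then the integration-by-parts on the high-frequency piece must still contend with the $\snorm{\xi}^{-2N}$ singularity of the projector on the region $\snorm{\xi}\gtrsim\snorm{x}^{-1}$, and the resulting powers of $\snorm{x}$ cancel the gain from IBP; you cannot extract more than $\snorm{k}^{-1}$ this way. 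The paper avoids the $\xi=0$ singularity altogether by passing to physical space: it writes $M_{k,ij}=(\delta_{ij}\Delta-\partial_i\partial_j)[\fundsollaplace*\fundsolssrkrey]$, uses the explicit Hankel-function formula for $\fundsolssrkrey$ together with a quantitative lower bound on $\Im\sqrt{-\alpha}-\snorm{\rey}/2$, and splits the convolution integral (not the Fourier integral) into three spatial regions relative to $R=\snorm{x}/2$. This yields $\snorml{\partial_i\partial_j D_x^\alpha[\fundsollaplace*\fundsolssrkrey](x)}\leq C\snorm{k}^{-1}\snorm{x}^{-n-\snorm{\alpha}}$ directly.

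For \eqref{MainThm_ComplSummability}--\eqref{MainThm_ComplSummabilityGradient} your sketch (``Poisson-summing the whole-space heat-kernel representation'') is different from the paper's route and leaves the Leray projector unaddressed; the projector is not pointwise and does not commute with periodisation in a way that preserves local $\LR{q}$-bounds without further argument. The paper instead factors the symbol as a bounded $\LR{q}(\grp)$-multiplier (verified by Marcinkiewicz plus transference) times the explicit tensor product $(1-\delta_\Z(k))\snorm{k}^{-2/(n+2)}\cdot(1+\snorm{\xi}^2)^{-n/(n+2)}$, whose inverse Fourier transform on $\torus\times\R^n$ is shown directly to lie in $\LR{q}$ for $q\in(1,\tfrac{n+2}{n})$; the endpoint $q=1$ in \eqref{MainThm_ComplSummabilityGradient} is then recovered a posteriori from the pointwise decay \eqref{MainThm_ComplPointwiseEst}.
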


\begin{rem}
Consider data $f\in\CRc{}(\grp)^n$. The integrability of $\fundsolcompl$ obtained in \eqref{MainThm_ComplSummability} implies, in combination with well-known integrability properties of the steady-state fundamental solutions  \eqref{SteadyStateStokesFundEq} and \eqref{SteadyStateOseenFundEq}, that 
the solution $\uvel$ to \eqref{tplinns} given 
by the convolution \eqref{intro_ConvolutionWithTPFundsol} can be written in terms of classical integrals as ($i=1,\ldots,n$)
\begin{align*}
\uvel_i(t,x)&= \bp{\fundsolsteadystatevel\otimes \onedist_{\torus}}_{ij}*f_j(t,x) + \fundsolcompl_{ij}*f_j(t,x)\\
&= \int_{\R^n} \fundsolsteadystatevel_{ij}(x-y)\Bp{\int_{\torus}f_j(s,y)\,\ds}\,\dy
+\int_\grp  \fundsolcompl_{ij}(t-s,x-y) f_j(s,y)\,\dsy\\
&=: \uvel_i^s(x) + \uvel_i^p(t,x).
\end{align*}
Observe that $\uvel^s$ is a solution to a steady-state Stokes ($\rey=0$) or Oseen ($\rey\neq 0$) problem.
Consequently, the pointwise asymptotic structure at spatial infinity of $\uvel^s$ is well known; see for example 
\cite[Theorem V.3.2 and VII.6.2]{GaldiBookNew}. From 
\eqref{MainThm_ComplSummability} and \eqref{MainThm_ComplPointwiseEst}
it follows that $\uvel^p(t,x)=\bigo(\snorm{x}^{-n})$, which means that the decay rate of $\uvel^p(t,x)$ as $\snorm{x}\ra\infty$ is actually \emph{faster} than 
the decay rate of the leading term in the asymptotic expansion of $\uvel^s(x)$. In other words,
the leading term in an asymptotic expansion of $\uvel$ coincides with the (known) leading term in the expansion of $\uvel^s$. 
This direct consequence of Theorem \ref{MainThm} is by no means trivial.    
\end{rem}

\section{Preliminaries}

We denote by $\B_R\coloneqq\B_R(0)$ balls in $\Rn$ centered at $0$. Moreover, we let $\B_{R,r}\coloneqq\B_R\setminus\overline{\B_r}$ and $\B^R\coloneqq\Rn\setminus\overline{\B_R}$.

For a sufficiently regular function $u:\R\times\Rn\ra\R$, we put $\partial_i u\coloneqq\partial_{x_i} u$.
The differential operators $\Delta$, $\grad$ and $\Div$ only act in the spatial variables.
 
We let $\grp$ denote the group $\grp\coloneqq\torus \times \R^n$, with $\torus$ denoting the torus group $\torus\coloneqq\R/\per\Z$. 
$\grp$ is equipped with the quotient topology and differentiable structure inherited from $\R\times\R^n$ via the quotient mapping
$\quotientmap:\R\times\R^n\ra\grp$, $\quotientmap(t,x)\coloneqq\bp{[t],x}$.
Clearly, $\grp$ is a locally compact abelian group with Haar measure given by the product of the (normalized) Haar measure $\dt$ on $\torus$ and the Lebesgue measure $\dx$ on $\Rn$. 
We implicitly identify $\torus$ with the interval $[0,\per)$, whence the (normalized) Haar measure on $\torus$ is determined by
\begin{align*}
\forall f\in\CR{}(\torus):\quad \int_\torus f\,\dt \coloneqq \frac{1}{\per}\int_0^\per f(t)\,\dt.
\end{align*}
We identify the dual group $\dualgrp$ with $\Z\times\Rn$ and denote points in $\dualgrp$ by $(k,\xi)$. 

We denote the Schwartz-Bruhat space of generalized Schwartz functions by $\SR(\grp)$; see \cite{Bruhat61}. By 
$\TDR(\grp)$ we denote the corresponding space of tempered distributions. The Fourier transformation on $\grp$ and its inverse take the form 
\begin{align*}
&\FT_\grp:\SR(\grp)\ra\SR(\dualgrp),\quad \FT_\grp\nb{\uvel}(k,\xi)\coloneqq
\int_\torus\int_{\R^n} \uvel(t,x)\,\e^{-ix\cdot\xi-ik\perf t}\,\dx\dt,\\
&\iFT_\grp:\SR(\dualgrp)\ra\SR(\grp),\quad \iFT\nb{\wvel}(t,x)\coloneqq
\sum_{k\in\Z}\,\int_{\R^n} \wvel(k,\xi)\,\e^{ix\cdot\xi+ik\perf t}\,\dxi,
\end{align*}
respectively, provided the Lebesgue measure $\dxi$ is normalized appropriately. By duality, $\FT_\grp$ extends to a homeomorphism $\FT_\grp:\TDR(\grp)\ra\TDR(\dualgrp)$. Observe that $\FT_\grp=\FT_\Rn\circ\FT_\torus$.

We denote the Dirac delta distribution on $\Rn$, $\torus$ and $\Z$ by $\delta_\Rn$, $\delta_\torus$ and $\delta_\Z$ , respectively.
Observe that $\delta_\Z$ is a function with $\delta_\Z(k)=1$ if $k=0$ and $\delta_\Z(k)=0$ otherwise. Also note that 
$\FT_\torus\nb{1_\torus}=\delta_\Z$.

Given a tensor $\Gamma\in\TDR(\grp)^{n\times m}$, we define the convolution of $\Gamma$ with vector field $f\in\SR(\grp)^m$ as 
the vector field $\Gamma * f \in\TDR(\grp)^n$ with $\nb{\Gamma*f}_{i} \coloneqq \Gamma_{ij}*f_j$.

The $\LR{q}(\grp)$-spaces with norm $\norm{\cdot}_q$ are defined in the usual way via the Haar measure $\dx\dt$ on $\grp$. We further introduce 
the Sobolev space
\begin{align*}
&\WSR{1,2}{q}(\grp) \coloneqq \closure{\CRci(\grp)}{\norm{\cdot}_{1,2,q}},\quad 
\norm{f}_{1,2,q}\coloneqq\Bp{
\norm{\partial_t f}_{q}^q +
\sum_{\snorm{\alpha}\leq 2} \norm{\partial_x^\alpha f}_{q}^q  
}^{\frac{1}{q}},
\end{align*}
where $\CRci(\grp)$ denotes the space of smooth functions of compact support on $\grp$.

We emphasize at this point that a framework based on $\grp$ is a natural setting for the time-period Stokes and Oseen equations. It it easy to see 
that lifting by the restriction $\quotientmap_{|\R^n\times[0,\per)}$ of the quotient mapping provides us with an equivalence between the time-periodic 
linearization \eqref{tplinns} and the system
\begin{align*}
\begin{pdeq}
&\partial_t\uvel-\Delta\uvel + \rey \partial_1 \uvel + \grad\upres = f && \tin\grp, \\
&\Div\uvel =0 && \tin\grp.
\end{pdeq}
\end{align*}
An immediate advantage obtained by writing the time-periodic Stokes or Oseen problem as system of equations on $\grp$ is the ability to then apply the Fourier transform $\FT_\grp$ and re-write the problem in terms of Fourier symbols. We shall take advantage of this possibility in the proof of the main theorem below.  

Constants in capital letters in the proofs and theorems are global, while constants in small letters are local to the proof in which they appear. Unless otherwise stated, constants are positive.

\section{Proof of the main theorem}
A large amount of the proof of Theorem \ref{MainThm} is based on pointwise estimates of the functions
\begin{align}\label{defofFundsolSSRk}
\fundsolssralpha \colon \R^n\setminus\set{0}\ra\C,\quad \fundsolssralpha(x) \coloneqq \frac{i}{4} \BBp{\frac{\sqrt{-\alpha}}{2\pi \snorm{x}}\,}^{\frac{n-2}{2}}\hankelone{\frac{n-2}{2}}
\Bp{\sqrt{-\alpha}\cdot \snorm{x}}
\end{align}
and
\begin{align}
\fundsolssrkrey \colon \R^n\setminus\set{0}\ra\C,\quad \fundsolssrkrey \coloneqq \fundsolssralpha (x) \cdot \e^{\frac{\rey}{2}x_1}
\end{align}
with $\alpha \coloneqq (\rey/2)^2+i\perf k$ and $k \in \Z$.
Here $\hankelone{\nu}$ denotes the Hankel function of the first kind and $\sqrt{z}$ the square root of $z$ with \emph{nonnegative} imaginary part.
It is well known, see for example \cite[Chapter 5.8]{Stakgold_BrdValueBook}, that $\fundsolssralpha$ is a fundamental solution in $\TDR(\R^n)$ to the Helmholtz equation
\begin{align}\label{HelmholtzEqFundsolEq}
\bp{-\Delta + \alpha} \fundsolssralpha = \delta_{\R^n}
\end{align}
when $\Im (\alpha) \neq 0$, which is the case if $k \neq 0$.
In order to analyze $\fundsolssrkrey$, we first recall the following properties of Hankel functions:
\begin{lem}\label{estHankelFktlem}
Hankel functions are analytic in $\C\setminus\set{0}$ with
\begin{align}\label{diffOfHankelFormula}
\forall\nu\in\C\ \forall z\in\C\setminus\set{0}:\quad \ddz\hankelone{\nu} (z) = \hankelone{\nu-1}(z) - \frac{\nu}{z}\,\hankelone{\nu}(z).
\end{align}
The Hankel functions satisfy the following estimates:
\begin{align}
&\forall\nu\in\C\ \forall\epsilon>0\ \exists \Cc[estHankelFktInftyConst]{C}>0\ \forall \snorm{z}\geq \epsilon:&&
\snorml{\hankelone{\nu} \np{z}}\leq \const{estHankelFktInftyConst}\, \snorm{z}^{-\frac{1}{2}}\, \e^{-\impart z},
\label{estHankelFktAtInfty}\\
&\forall\nu\in\R_+\ \forall R>0\ \exists \Cc[estHankelFktZeroConst]{C}>0\ \forall \snorm{z}\leq R:&&
\snorml{\hankelone{\nu} \np{z}}\leq 
\const{estHankelFktZeroConst}\, \snorm{z}^{-\nu},\label{estHankelFktAtZero}\\
&\forall 0\leq R<1\ \exists \Cc[estHankelzeroFktZeroConst]{C}>0\ \forall \snorm{z}\leq R: && 
\snorml{\hankelone{0} \np{z}}\leq 
\const{estHankelzeroFktZeroConst}\, \snorml{\log\np{\snorm{z}}}. \label{estHankelzeroFktAtZero}
\end{align}
\end{lem}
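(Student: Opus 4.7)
The proof is a careful compilation of classical facts about Hankel functions, which can be extracted from standard references such as \cite{Stakgold_BrdValueBook} or Watson's treatise. The plan is to write $\hankelone{\nu} = J_\nu + iY_\nu$, where $J_\nu$ and $Y_\nu$ are the Bessel functions of the first and second kind, and then to read off each assertion from the known series and asymptotic expansions.

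The analyticity statement and the derivative formula \eqref{diffOfHankelFormula} are classical. Both $J_\nu$ and $Y_\nu$ are analytic on $\C\setminus(-\infty,0]$, and via the standard integral representation $\hankelone{\nu}(z)$ extends analytically to $\C\setminus\set{0}$. The recurrence \eqref{diffOfHankelFormula} follows from the general differentiation identity $\ddz\bb{z^{-\nu}\hankelone{\nu}(z)} = -z^{-\nu}\hankelone{\nu+1}(z)$ combined with the three-term recursion $\hankelone{\nu-1}(z)+\hankelone{\nu+1}(z) = \frac{2\nu}{z}\hankelone{\nu}(z)$, after solving for $\hankelone{\nu-1}(z)$ and substituting.

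For the large-argument estimate \eqref{estHankelFktAtInfty}, I would invoke the classical asymptotic expansion
\begin{align*}
\hankelone{\nu}(z) = \sqrt{\frac{2}{\pi z}}\,\e^{i\np{z - \nu\pi/2 - \pi/4}}\bp{1 + \bigo\np{\snorm{z}^{-1}}} \qquad \text{as }\snorm{z}\ra\infty,\ -\pi<\arg z<\pi.
\end{align*}
Taking absolute values gives $\snorml{\e^{iz}}=\e^{-\impart z}$ and $\snorml{z^{-1/2}}=\snorm{z}^{-1/2}$, and since the remainder is bounded on $\snorm{z}\geq\epsilon$, \eqref{estHankelFktAtInfty} follows with a constant depending on $\nu$ and $\epsilon$. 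For the small-argument bounds \eqref{estHankelFktAtZero} and \eqref{estHankelzeroFktAtZero}, I would use the power series
\begin{align*}
J_\nu(z) = \sum_{m=0}^\infty \frac{\np{-1}^m\np{z/2}^{\nu+2m}}{m!\,\Gamma\np{\nu+m+1}},
\end{align*}
together with the known behaviour of $Y_\nu$ near the origin: $Y_\nu(z)\sim -\frac{\Gamma\np{\nu}}{\pi}\np{z/2}^{-\nu}$ for $\nu>0$ (with additional logarithmic terms when $\nu$ is a positive integer, which are dominated by the algebraic singularity) and $Y_0(z)\sim\frac{2}{\pi}\log z$. Combining,
\begin{align*}
\hankelone{\nu}(z) = -\frac{i\Gamma\np{\nu}}{\pi}\np{z/2}^{-\nu}+\bigo\bp{\snorm{z}^{-\nu+2}}\quad\np{\nu>0}, \qquad \hankelone{0}(z) = \frac{2i}{\pi}\log z + \bigo\np{1}
\end{align*}
as $z\ra 0$, from which \eqref{estHankelFktAtZero} and \eqref{estHankelzeroFktAtZero} are immediate.

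The proof presents no genuine obstacle; all ingredients are classical. The only mildly delicate points are to treat the integer-$\nu$ case of \eqref{estHankelFktAtZero} (where the logarithmic contribution to $Y_\nu$ is absorbed into the leading $\snorm{z}^{-\nu}$ bound on any bounded set) and to note that, since the inequalities depend only on $\snorm{z}$, the choice of branch for $\sqrt{\cdot}$ fixed elsewhere in the paper is irrelevant for the bounds themselves.
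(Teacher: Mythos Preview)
Your proposal is correct and follows essentially the same approach as the paper: both treat the lemma as a compilation of classical facts about Hankel functions. The paper's own proof is in fact much terser than yours---it simply cites the relevant formulas in Abramowitz--Stegun (9.1.27 for the recurrence, 9.2.3 for the large-argument asymptotics, 9.1.8--9.1.9 for the small-argument behaviour) without writing out the expansions explicitly, so your version supplies strictly more detail than the original.
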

\begin{proof}
The recurrence relation \eqref{diffOfHankelFormula} is a well-know property of various Bessel functions; see for example
\cite[9.1.27]{AbramowitzStegunHandbook}.
We refer to \cite[9.2.3]{AbramowitzStegunHandbook} for the asymptotic behavior \eqref{estHankelFktAtInfty} of $\hankelone{\nu}(z)$ as $z\ra \infty$.
See \cite[9.1.9 and 9.1.8]{AbramowitzStegunHandbook} for the asymptotic behavior \eqref{estHankelFktAtZero} and \eqref{estHankelzeroFktAtZero} 
of $\hankelone{\nu}(z)$ as $z\ra 0$.
\end{proof}

At first we want to show that $\fundsolssrkrey$ is a fundamental solution in $\TDR(\R^n)$ to the equation
\begin{align}\label{PDEfundsolssrkrey}
\bp{-\Delta +\rey \partial_1+ i\perf k} \fundsolssrkrey = \delta_{\R^n}.
\end{align}
For this purpose, we want to use the following technical lemma, which will also be important for the derivation of the pointwise estimates claimed in Theorem \ref{MainThm}.
\begin{lem}\label{LemEstImalpha}
There exists a constant $\Cc[Imalpha]{C}=\const{Imalpha}(\rey,\per)>0$ such that 
\begin{align}\label{EstImalpha}
\frac{\snorm{\rey}}{2} - \Im( \sqrt{-\alpha}) \leq -\const{Imalpha} \snorm{k}^\frac{1}{2}
\end{align}
for all $k \in \Z \setminus \set{0}$.
\end{lem}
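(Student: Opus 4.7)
The plan is to compute $\impart(\sqrt{-\alpha})$ explicitly, recognize that the claimed inequality amounts to showing $\impart(\sqrt{-\alpha}) - \snorm{\rey}/2 \geq \const{Imalpha}\snorm{k}^{1/2}$, and then convert this positive difference into a form where both numerator and denominator can be estimated separately.

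First I would use the standard formula for the principal square root with nonnegative imaginary part. Writing $-\alpha = -(\rey/2)^2 - i\perf k$, one has $|-\alpha| = \sqrt{(\rey/2)^4 + (\perf k)^2}$ and $\realpart(-\alpha) = -(\rey/2)^2$, so
\begin{equation*}
\impart(\sqrt{-\alpha}) = \sqrt{\frac{\snorm{\alpha} + (\rey/2)^2}{2}}.
\end{equation*}
Since $\snorm{\alpha} \geq (\rey/2)^2$, this already gives $\impart(\sqrt{-\alpha}) \geq \snorm{\rey}/2$, with strict inequality whenever $k \neq 0$, so the left-hand side of \eqref{EstImalpha} is nonpositive and it remains to bound it quantitatively.

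The second step would be to rewrite the difference. Using the standard identity $\sqrt{A} - \sqrt{B} = (A-B)/(\sqrt{A}+\sqrt{B})$ with $A = (\snorm{\alpha}+(\rey/2)^2)/2$ and $B = (\rey/2)^2$, together with
\begin{equation*}
\snorm{\alpha} - (\rey/2)^2 = \frac{\snorm{\alpha}^2 - (\rey/2)^4}{\snorm{\alpha}+(\rey/2)^2} = \frac{(\perf k)^2}{\snorm{\alpha}+(\rey/2)^2},
\end{equation*}
one obtains the identity
\begin{equation*}
\impart(\sqrt{-\alpha}) - \frac{\snorm{\rey}}{2} = \frac{(\perf k)^2}{2\,\bp{\snorm{\alpha}+(\rey/2)^2}\,\bp{\impart(\sqrt{-\alpha})+\snorm{\rey}/2}}.
\end{equation*}

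The third step is to upper-bound the denominator by a constant times $\snorm{k}^{3/2}$ uniformly in $k$ with $\snorm{k}\geq 1$. The elementary inequality $\sqrt{a^2+b^2}\leq a+b$ for $a,b\geq 0$ yields $\snorm{\alpha} \leq (\rey/2)^2 + \perf\snorm{k}$, hence $\snorm{\alpha}+(\rey/2)^2 \leq c_1\snorm{k}$ for a constant $c_1=c_1(\rey,\per)$. Similarly, from $\snorm{\alpha}\geq(\rey/2)^2$ one gets $\impart(\sqrt{-\alpha}) \leq \sqrt{\snorm{\alpha}}$, whence $\impart(\sqrt{-\alpha})+\snorm{\rey}/2 \leq c_2\snorm{k}^{1/2}$. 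Substituting these bounds and using $(\perf k)^2 = (\perf)^2 k^2$ gives \eqref{EstImalpha} with an explicit constant $\const{Imalpha} = \const{Imalpha}(\rey,\per) > 0$.

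The only real subtlety is choosing the correct branch of the square root at the very beginning; once $\impart(\sqrt{-\alpha})$ is written in closed form, the remainder is straightforward algebra with the identity $\sqrt{A}-\sqrt{B}=(A-B)/(\sqrt{A}+\sqrt{B})$ applied twice. No asymptotic case distinction between small and large $\snorm{k}$ is needed because $\snorm{k}\geq 1$ throughout.
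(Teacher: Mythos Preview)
Your proof is correct and takes a genuinely different route from the paper's. The paper computes $\impart(\sqrt{-\alpha})$ via polar coordinates and half-angle trigonometric identities, arriving at
\[
\impart(\sqrt{-\alpha}) = \frac{\snorm{\rey}}{2}\,\frac{1}{\sqrt{2}}\Bp{\Bp{1+\frac{(\perf k)^2}{(\rey/2)^4}}^{1/2}+1}^{1/2},
\]
and then argues indirectly: strict negativity of $\snorm{\rey}/2 - \impart(\sqrt{-\alpha})$ for each $k\neq 0$ together with the limit $-\sqrt{\perf/2}$ as $\snorm{k}\to\infty$ yields the existence of a constant by an implicit compactness argument over $\Z\setminus\{0\}$. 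You instead use the Cartesian formula $\impart(\sqrt{-\alpha}) = \sqrt{(\snorm{\alpha}+(\rey/2)^2)/2}$ and rationalize the difference twice, which produces the bound directly with an explicit constant and no limiting step or case distinction. Your argument is the more elementary and quantitative of the two; the paper's version is slightly shorter but relies on the reader supplying the ``negative on a discrete set with negative limit implies uniformly bounded away from zero'' step.
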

\begin{proof}
For $\rey=0$ the statement follows directly. If we assume $\lambda \neq 0$, it holds
\[
\sqrt{-\alpha}=\BBp{\Bp{\frac{\rey}{2}}^4 +\Bp{\perf k}^2 }^\frac{1}{4} \exp \Bp{\frac{i}{2} \Bp{\pi +\arctan \bp{ \perf k \cdot 4\rey^{-2}}}}.
\]
Thus we obtain
\begin{align*}
\Im(\sqrt{-\alpha}) & =\BBp{\Bp{\frac{\rey}{2}}^4 +\Bp{\perf k}^2 }^\frac{1}{4} \sin \BBp{\frac{1}{2} \Bp{\pi +\arctan \bp{ \perf k \cdot 4\rey^{-2}}}} \\
& =\BBp{\Bp{\frac{\rey}{2}}^4 +\Bp{\perf k}^2 }^\frac{1}{4} \cos \Bp{\frac{1}{2} \arctan \bp{ \perf k \cdot 4\rey^{-2}}} \\
& = \BBp{\Bp{\frac{\rey}{2}}^4 +\Bp{\perf k}^2 }^\frac{1}{4} \frac{1}{\sqrt{2}}\Bp{1+\cos \Bp{ \arctan \bp{ \perf k \cdot 4\rey^{-2}}}}^\frac{1}{2} \\
& = \BBp{\Bp{\frac{\rey}{2}}^4 +\Bp{\perf k}^2 }^\frac{1}{4} \frac{1}{\sqrt{2}}\Bp{1+\Bp{1+\bp{\perf k \cdot 4\rey^{-2}}^2}^{-\frac{1}{2}}}^\frac{1}{2} \\
& = \frac{\snorm{\rey}}{2} \frac{1}{\sqrt{2}} \Bp{\Bp{1+\frac{\bp{\perf k}^2}{\bp{\rey/2}^4}}^\frac{1}{2} + 1 }^\frac{1}{2}.
\end{align*}
Consequently, it holds
\[
\frac{\snorm{\rey}}{2} - \Im( \sqrt{-\alpha}) <0
\]
and
\[
\lim_{\snorm{k} \to \infty}  \frac{\frac{\snorm{\rey}}{2} - \Im( \sqrt{-\alpha})}{\snorm{k}^\frac{1}{2}} = - \sqrt{\frac{\perf}{2}}.
\]
This implies the assertion and finishes the proof.
\end{proof}

With the help of the previous two lemmata, we can now verify that $\fundsolssrkrey$ is an element of $\TDR(\R^n)$ and satisfies equation \eqref{PDEfundsolssrkrey}.

\begin{lem}\label{lemfunsolssrkreyDistr}
For all $k \in \Z \setminus \set{0}$, the function $\fundsolssrkrey$ is a fundamental solution in $\TDR(\R^n)$ to equation \eqref{PDEfundsolssrkrey}, and it holds
\begin{align}\label{fundsolssrkreyMultiplier}
\fundsolssrkrey = \iFT_{\R^n} \Bb{\frac{1}{\snorm{\xi}^2 + i \perf k + i \rey \xi_1}}.
\end{align}
\end{lem}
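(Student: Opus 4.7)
The plan is to establish in sequence (i) $\fundsolssrkrey \in \LR{1}(\R^n)$, (ii) the distributional PDE \eqref{PDEfundsolssrkrey} via a conjugation reducing to the Helmholtz case, and (iii) the multiplier formula \eqref{fundsolssrkreyMultiplier} by Fourier transform of the PDE. For (i), near the origin the Hankel-function bounds \eqref{estHankelFktAtZero} and \eqref{estHankelzeroFktAtZero} give $\snorml{\fundsolssralpha(x)} \lesssim \snorm{x}^{2-n}$ (respectively $\snorml{\log\snorm{x}}$ for $n=2$), which is locally integrable; for large $\snorm{x}$, \eqref{estHankelFktAtInfty} combined with Lemma \ref{LemEstImalpha} yields
\begin{align*}
\snorml{\fundsolssrkrey(x)} \lesssim \snorm{x}^{-\frac{n-1}{2}}\,\e^{\bp{\frac{\snorm{\rey}}{2}-\Im(\sqrt{-\alpha})}\snorm{x}} \lesssim \snorm{x}^{-\frac{n-1}{2}}\,\e^{-\const{Imalpha}\snorm{k}^{1/2}\snorm{x}},
\end{align*}
so $\fundsolssrkrey$ decays exponentially and therefore $\fundsolssrkrey \in \LR{1}(\R^n) \subset \TDR(\R^n)$.

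For (ii), given a test function $\phi \in \CRci(\R^n)$ I set $\psi \coloneqq \e^{\frac{\rey}{2}x_1}\phi$, which is again in $\CRci(\R^n)$. A direct product-rule computation with $\alpha = (\rey/2)^2 + i\perf k$ gives the pointwise identity
\begin{align*}
\e^{\frac{\rey}{2}x_1}\bp{-\Delta - \rey\partial_1 + i\perf k}\phi = (-\Delta + \alpha)\psi.
\end{align*}
Multiplying by $\fundsolssralpha$, integrating over $\R^n$, and invoking \eqref{HelmholtzEqFundsolEq},
\begin{align*}
\int_{\R^n}\fundsolssrkrey\bp{-\Delta-\rey\partial_1+i\perf k}\phi\,\dx = \int_{\R^n}\fundsolssralpha\np{-\Delta+\alpha}\psi\,\dx = \psi(0) = \phi(0),
\end{align*}
which is precisely the distributional statement $(-\Delta+\rey\partial_1+i\perf k)\fundsolssrkrey = \delta_{\R^n}$ in $\TDR(\R^n)$.

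For (iii), since $\fundsolssrkrey \in \LR{1}(\R^n)$ its Fourier transform is a bounded continuous function, and transforming the PDE gives $(\snorm{\xi}^2 + i\rey\xi_1 + i\perf k)\,\FT_{\R^n}[\fundsolssrkrey](\xi) = 1$. For $k\neq 0$ the symbol is nowhere zero on $\R^n$: its imaginary part $\rey\xi_1 + \perf k$ vanishes only on the hyperplane $\xi_1 = -2\pi k/(\rey\per)$ (or nowhere, if $\rey=0$), where the real part $\snorm{\xi}^2>0$. Dividing and applying $\iFT_{\R^n}$ yields \eqref{fundsolssrkreyMultiplier}. The main technical input throughout is Lemma \ref{LemEstImalpha}, which guarantees that the exponential decay of $\fundsolssralpha$ strictly dominates the exponential growth introduced by $\e^{\frac{\rey}{2}x_1}$; without this strict dominance, neither the $\LR{1}$ membership nor the passage from the Helmholtz fundamental solution to $\fundsolssrkrey$ via the conjugation trick would go through.
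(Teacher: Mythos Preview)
Your proof is correct and follows essentially the same approach as the paper: both establish $\fundsolssrkrey\in\LR{1}(\R^n)$ via the Hankel-function bounds of Lemma~\ref{estHankelFktlem} together with Lemma~\ref{LemEstImalpha}, reduce the PDE to the Helmholtz equation \eqref{HelmholtzEqFundsolEq} through the exponential factor $\e^{\frac{\rey}{2}x_1}$, and then Fourier-transform and divide by the nonvanishing symbol. The only cosmetic difference is that the paper carries out the reduction by differentiating the product $\fundsolssralpha\,\e^{\frac{\rey}{2}x_1}$ directly, whereas you perform the dual computation on test functions via the substitution $\psi=\e^{\frac{\rey}{2}x_1}\phi$; these are two sides of the same identity.
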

\begin{proof}
We want to utilize the estimates from Lemma \ref{estHankelFktlem}. For $x \in \R^n$, $x \neq 0$, with $\snorm{x} \leq \half$, estimate \eqref{estHankelFktAtZero} implies
\[
\snorm{\fundsolssrkrey(x)} \leq \Cc{c} \snorm{x}^{-n+2} \e^{\frac{\rey}{2}x_1}
\]
if $n > 2$, and estimate \eqref{estHankelzeroFktAtZero} yields
\[
\snorm{\fundsolssrkrey(x)} \leq \Cc{c} \snorm{\log ( \snorm{\alpha}^\frac{1}{2} \snorm{x})}\, \e^{\frac{\rey}{2}x_1}
\]
in the case $n=2$. For $\snorm{x}>\half$, inequality \eqref{estHankelFktAtInfty} in combination with Lemma \ref{LemEstImalpha} implies
\begin{align*}
\snorm{\fundsolssrkrey(x)} 
& \leq \const{estHankelFktInftyConst} \snorm{\alpha}^{\frac{n-3}{4}} \snorm{x}^\frac{1-n}{2} \, \exp \Bb{-\Im \bp{\sqrt{-\alpha} \snorm{x}} + \frac{\rey}{2}x_1} \\
& \leq \const{estHankelFktInftyConst} \snorm{\alpha}^{\frac{n-3}{4}} \snorm{x}^\frac{1-n}{2} \, \exp \Bb{\Bp{-\Im \bp{\sqrt{-\alpha}} + \frac{\snorm{\rey}}{2}}\snorm{x}} \\
& \leq \const{estHankelFktInftyConst} \snorm{\alpha}^{\frac{n-3}{4}} \snorm{x}^\frac{1-n}{2} \, \exp \bb{-\const{Imalpha} \snorm{k}^{\frac{1}{2}} \snorm{x}}.
\end{align*}
In total, we have seen $\fundsolssrkrey \in \LR{1}(\R^n) \subseteq \TDR(\R^n)$. Moreover, by utilizing equation \eqref{HelmholtzEqFundsolEq}, a straightforward computation leads to
\[
\bp{-\Delta +\rey \partial_1+ i \perf k} \fundsolssrkrey = \bp{- \Delta \fundsolssralpha + \alpha \fundsolssralpha } \,  \e^{\frac{\rey}{2} x_1} = \delta_{\R^n}.
\]
Applying the Fourier transformation to this equality, we directly obtain identity \eqref{fundsolssrkreyMultiplier} since $k \neq 0$.
\end{proof}

The next lemma will be the key for the derivation of the pointwise estimate
\eqref{MainThm_ComplPointwiseEst}
from Theorem \ref{MainThm}. We proceed analogously to the proof of \cite[Lemma 3.2]{Kyed_FundsolTPStokes2016}.

\begin{lem}\label{pointwiseEstLem}
If we define
\begin{align}\label{defoffundsollaplaceCONVLfundsolssrk}
\fundsollaplace*\fundsolssrkrey (x) \coloneqq \int_{\R^n} \fundsollaplace(x-y)\,\fundsolssrkrey(y)\,\dy,
\end{align}
then for all $\alpha \in \N_0^n$ and all $\epsilon>0$ there exists a constant $\Cc[pointwiseEstLemC]{C}>0$ such that for all $x \in \R^n$ with $\snorm{x} \geq \epsilon$ it holds
\begin{align}
\snorml{\partial_i\partial_j D_x^\alpha \nb{\fundsollaplace*\fundsolssrkrey}(x)} & \leq \const{pointwiseEstLemC}\,\snorm{k}^{-1}\,\snorm{x}^{-n-\snorm{\alpha}},\label{pointwiseEstLemEst1} 
\end{align}
for all $i,j\in \{1, \ldots, n\}$.
\end{lem}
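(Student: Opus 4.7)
My approach is to exploit the PDE of Lemma \ref{lemfunsolssrkreyDistr} in order to extract the factor $\snorm{k}^{-1}$ claimed in \eqref{pointwiseEstLemEst1}. Rewriting $(-\Delta+\rey\partial_1+i\perf k)\fundsolssrkrey = \delta_{\R^n}$ as $i\perf k\,\fundsolssrkrey = \delta_{\R^n}+\Delta\fundsolssrkrey-\rey\partial_1\fundsolssrkrey$, convolving with $\fundsollaplace$, and using $\fundsollaplace*\Delta\fundsolssrkrey=(\Delta\fundsollaplace)*\fundsolssrkrey=\fundsolssrkrey$, I obtain
\begin{equation*}
\fundsollaplace*\fundsolssrkrey=\frac{1}{i\perf k}\bb{\fundsollaplace+\fundsolssrkrey-\rey\,\partial_1(\fundsollaplace*\fundsolssrkrey)}.
\end{equation*}
For $\rey=0$ this identity directly yields the claim, which is the Stokes argument of \cite[Lemma 3.2]{Kyed_FundsolTPStokes2016}. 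For $\rey\neq 0$ I would iterate $N$ times to obtain
\begin{equation*}
\fundsollaplace*\fundsolssrkrey=\sum_{m=0}^{N-1}\frac{(-\rey)^m}{(i\perf k)^{m+1}}\bb{\partial_1^m\fundsollaplace+\partial_1^m\fundsolssrkrey}+\frac{(-\rey)^N}{(i\perf k)^N}\partial_1^N(\fundsollaplace*\fundsolssrkrey),
\end{equation*}
then apply $\partial_i\partial_j D_x^\alpha$ termwise.

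For the explicit summands I would use the standard bound $\snorm{\partial_i\partial_j D_x^\alpha\partial_1^m\fundsollaplace(x)}\leq c_{m,\alpha}\snorm{x}^{-n-\snorm{\alpha}-m}$ together with the following estimate for derivatives of $\fundsolssrkrey$, obtained exactly as in the proof of Lemma \ref{lemfunsolssrkreyDistr}: repeated use of the Hankel recurrence \eqref{diffOfHankelFormula} represents each $\partial_i\partial_j D_x^\alpha\partial_1^m\fundsolssrkrey$ as a finite linear combination of terms of the form $(\text{polynomial in }x)\cdot\snorm{x}^{-\mu}\hankelone{\nu}(\sqrt{-\alpha}\snorm{x})\e^{\frac{\rey}{2}x_1}$, and combining \eqref{estHankelFktAtInfty} with Lemma \ref{LemEstImalpha} yields the exponential bound $\snorm{\partial_i\partial_j D_x^\alpha\partial_1^m\fundsolssrkrey(x)}\leq c_{m,\alpha}\snorm{k}^{\gamma_m}\e^{-\const{Imalpha}\snorm{k}^{1/2}\snorm{x}}$ on $\snorm{x}\geq\epsilon$, which dominates any polynomial in $\snorm{k}^{-1}\snorm{x}^{-n-\snorm{\alpha}}$. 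Together with $\snorm{k}\geq 1$ and $\snorm{x}\geq\epsilon$, every summand in the finite sum is then bounded by $c_m(\snorm{\rey}/\epsilon)^m\snorm{k}^{-1}\snorm{x}^{-n-\snorm{\alpha}}$, so the sum enjoys the desired estimate for each fixed $N$.

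The principal difficulty is the remainder $\frac{(-\rey)^N}{(i\perf k)^N}\partial_i\partial_j D_x^{\alpha+Ne_1}(\fundsollaplace*\fundsolssrkrey)(x)$, for which further iteration only produces a higher-derivative version of the same object. My plan is to establish a uniform a priori pointwise bound $\snorm{\partial_i\partial_j D_x^\beta(\fundsollaplace*\fundsolssrkrey)(x)}\leq M_\beta$ on $\snorm{x}\geq\epsilon$, independent of $k\in\Z\setminus\set{0}$, whenever $\snorm{\beta}\geq 1$. This I would derive from the decomposition $\fundsollaplace=\cutoff_\epsilon\fundsollaplace+(1-\cutoff_\epsilon)\fundsollaplace$ with $\cutoff_\epsilon$ a smooth cutoff supported in $\B_{\epsilon/4}$: the far-field derivative $\partial_i\partial_j D_x^\beta[(1-\cutoff_\epsilon)\fundsollaplace]$ is smooth and lies in $\LR{1}(\R^n)$ by its $\snorm{y}^{-n-\snorm{\beta}}$ decay at infinity, so Young's inequality together with the uniform $\LR{1}(\R^n)$-bound on $\fundsolssrkrey$ (implicit in the proof of Lemma \ref{lemfunsolssrkreyDistr}) controls its convolution with $\fundsolssrkrey$; for the compactly supported near-field piece, the integral $(\cutoff_\epsilon\fundsollaplace*\fundsolssrkrey)(x)=\int_{\B_{\epsilon/4}}\cutoff_\epsilon(y)\fundsollaplace(y)\fundsolssrkrey(x-y)\,\dy$ on $\snorm{x}\geq\epsilon$ lives on $\snorm{x-y}\geq 3\epsilon/4$, where $\fundsolssrkrey$ is smooth, so all derivatives can be shifted onto the $\fundsolssrkrey$-factor and bounded pointwise via Lemma \ref{estHankelFktlem}. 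Choosing $N$ sufficiently large (depending on $n,\snorm{\alpha},\rey,\epsilon$) then bounds the remainder by $C\snorm{k}^{-1}\snorm{x}^{-n-\snorm{\alpha}}$ on every bounded range of $\snorm{x}$; for very large $\snorm{x}$ the iteration instead extends to $N\to\infty$, since $\snorm{\rey}/(\snorm{k}\snorm{x})<1$ makes the infinite sum converge term-by-term while the exponential decay of derivatives of $\fundsolssrkrey$ causes the remainder to vanish, closing the argument.
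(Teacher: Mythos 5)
Your approach is genuinely different from the paper's. You extract the factor $\snorm{k}^{-1}$ algebraically, by iterating $i\perf k\,\fundsolssrkrey = \delta_{\R^n}+\Delta\fundsolssrkrey-\rey\partial_1\fundsolssrkrey$, whereas the paper produces it analytically: after decomposing the convolution over $\B_R$, $\B^{3R}$, $\B_{4R,R/2}$ with $R=\snorm{x}/2$, the factor $\snorm{\alpha}^{-1}\sim\snorm{k}^{-1}$ drops out of a rescaling $s=\snorm{\alpha}^{1/2}r$ of the Hankel integral over $\B_R$, combined with the exponential decay from Lemma~\ref{LemEstImalpha}, with no iteration at all. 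For $\rey=0$ your argument closes in a single step and is indeed shorter than the spatial decomposition (though the cited proof of \cite[Lemma 3.2]{Kyed_FundsolTPStokes2016} also uses the decomposition, so this is not quite ``the Stokes argument'' of that paper).

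For $\rey\neq0$ there is a genuine gap in the remainder treatment. You reduce to bounding $\frac{(-\rey)^N}{(i\perf k)^N}\partial_i\partial_j D_x^{\alpha+Ne_1}\nb{\fundsollaplace*\fundsolssrkrey}(x)$. The uniform \emph{constant} bound $M_\beta$ on $\snorm{x}\geq\epsilon$ that you propose cannot yield the required $\snorm{x}^{-n-\snorm{\alpha}}$ decay on unbounded ranges of $\snorm{x}$, and your closing ``$N\to\infty$'' sentence does not repair this: the constants in $\snorml{\partial_i\partial_j D_x^\alpha\partial_1^m\fundsollaplace(x)}\leq c_m\snorm{x}^{-n-\snorm{\alpha}-m}$ grow factorially in $m$ (the radius of real-analyticity of the homogeneous kernel $\snorm{x}^{2-n}$ at $x$ is comparable to $\snorm{x}$, and $m$ derivatives pick up a product of the form $a(a+2)\cdots$), so the term-by-term estimates for $\sum_m\frac{(-\rey)^m}{(i\perf k)^{m+1}}\partial_1^m\fundsollaplace$ diverge for \emph{every} $x$ and $k$, and the same objection applies to the Hankel contributions. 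To obtain a spatially decaying bound on $\partial_i\partial_j D_x^{\alpha+Ne_1}\nb{\fundsollaplace*\fundsolssrkrey}$ one is effectively driven back into the near/mid/far decomposition the paper uses, at which point the $\snorm{k}^{-1}$ factor emerges from the same calculation and the iteration becomes redundant. A secondary concern: the $k$-uniform bound on $\norm{\fundsolssrkrey}_{\LR{1}(\R^n)}$ is plausible but not actually ``implicit'' in the proof of Lemma~\ref{lemfunsolssrkreyDistr}, whose local constants are permitted to depend on $\alpha$ and hence on $k$; it would need its own verification, as would the passage from the quoted $\LR{1}*\LR{1}$ Young bound (which gives $\LR{1}$, not $\LR{\infty}$) to a pointwise bound.
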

\begin{proof}
In the proof of Lemma \ref{lemfunsolssrkreyDistr}, we have seen that $\fundsolssrkrey(y)$ decays exponentially as $\snorm{y}\ra\infty$. 
Therefore, one may verify directly from the pointwise definitions of $\fundsolssrkrey$ and $\fundsollaplace$ that the convolution integral in \eqref{defoffundsollaplaceCONVLfundsolssrk}
exists for all $x\in\R^n\setminus\set{0}$ and belongs to $\LRloc{1}(\R^n)$.
One may further verify that also the second order derivatives of $\fundsollaplace*\fundsolssrkrey$ are given pointwise by convolution integrals:
\begin{align}\label{fundsollaplaceCONVLfundsolssrk_secondorderderivatives}
\partial_i\partial_j \nb{\fundsollaplace*\fundsolssrkrey} (x) = \int_{\R^n} \partial_i\fundsollaplace(x-y)\,\partial_j\fundsolssrkrey(y)\,\dy.
\end{align}
Now fix $\epsilon>0$ and consider some $x\in\R^n$ with $\snorm{x}\geq \epsilon$. Put $R\coloneqq\frac{\snorm{x}}{2}$.  
Let $\cutoff\in\CRci(\R;\R)$ be a ``cut-off'' function with
\begin{align*}
\cutoff(r)=
\begin{pdeq}
&0 && \text{when }{0\leq \snorm{r}\leq \half},\\
&1 && \text{when }{1\leq \snorm{r}\leq 3},\\
&0 && \text{when }{4\leq \snorm{r}}.
\end{pdeq}
\end{align*}
Define $\cutoff_R:\R^n\ra\R$ by $\cutoff_R(y)\coloneqq\cutoff\bp{R^{-1}\snorm{y}}$. We decompose the integral in \eqref{fundsollaplaceCONVLfundsolssrk_secondorderderivatives} as
\begin{align*}
\partial_i\partial_j \nb{\fundsollaplace*\fundsolssrkrey} (x) 
&=\int_{B_{R}} \partial_i\fundsollaplace(x-y)\,\partial_j\fundsolssrkrey(y)\,\bp{1-\cutoff_R(y)}\,\dy\\ 
&\quad +\int_{B^{3R}} \partial_i\fundsollaplace(x-y)\,\partial_j\fundsolssrkrey(y)\,\bp{1-\cutoff_R(y)}\,\dy\\
&\quad + \int_{B_{4R,R/2}} \partial_i\fundsollaplace(x-y)\,\partial_j\fundsolssrkrey(y)\,\cutoff_R(y)\,\dy\\ 
&=: I_1(x) + I_2(x) + I_3(x).
\end{align*}
This also yields the decomposition
\begin{equation}\label{pointwiseEstLemDecomposition}
\partial_i\partial_j D_x^\alpha \nb{\fundsollaplace*\fundsolssrkrey}(x) = D_x^\alpha I_1(x) + D_x^\alpha I_1(x) + D_x^\alpha I_2(x).
\end{equation}
Recalling the definition \eqref{defofFundsolSSRk} of $\fundsolssrkrey$ as well as property \eqref{diffOfHankelFormula}
and estimate \eqref{estHankelFktAtInfty} of the Hankel function, we can estimate for $\snorm{y}\geq R/2$:  
\begin{align*}
\snorml{\partial_j \fundsolssrkrey(y)} 
\hspace{-40pt}& \hspace{40pt} = \snorml{\partial_j \fundsolssralpha(y) \, \e^{\frac{\rey}{2}y_1} + \fundsolssralpha(y)  \, \frac{\delta_{1j} \rey}{2} \e^{\frac{\rey}{2}y_1}} \\
&\leq \Cc{c} \e^{\frac{\snorm{\rey}}{2}\snorm{y}} \snorm{\alpha}^{\frac{n-2}{4}}\Bp{
\snormL{\partial_j\Bb{\snorm{y}^{\frac{2-n}{2}}}\, \hankelone{\frac{n-2}{2}} \bp{\sqrt{-\alpha} \cdot \snorm{y}}}
+\snormL{{\snorm{y}^{\frac{2-n}{2}}}\, \partial_j\Bb{\hankelone{\frac{n-2}{2}} \bp{\sqrt{-\alpha} \cdot \snorm{y}}}} \\
& \hspace{250pt} + \snormL{\snorm{y}^{\frac{2-n}{2}} \hankelone{\frac{n-2}{2}} \bp{\sqrt{-\alpha} \cdot \snorm{y}} \rey}
} \\
&\leq \Cc{c} \Bp{\snorm{\alpha}^{\frac{n}{4}-\frac{3}{4}}\, \snorm{y}^{-\frac{n}{2}-\half} + \snorm{\alpha}^{\frac{n}{4}-\frac{1}{4}}\, \snorm{y}^{-\frac{n}{2}+\half}}
\e^{-\Im (\sqrt{-\alpha})\snorm{y} + \frac{\snorm{\rey}}{2}\snorm{y}}\\
&\leq \Cc{c} \snorm{k}^{-1}\,\snorm{y}^{-(n+1)} \Bp{\bp{\snorm{k}^\half \snorm{y}}^{\frac{n}{2}+\half} 
+ \bp{\snorm{k}^\half \snorm{y}}^{\frac{n}{2}+\frac{3}{2}}}
\e^{-\const{Imalpha}\snorm{k}^\frac{1}{2} \snorm{y}}\\
&\leq \Cc{c} \snorm{k}^{-1}\,\snorm{y}^{1-n},
\end{align*}
where we employed \eqref{EstImalpha} and the inequality $\Cc{c} \snorm{k} \leq \snorm{\alpha} \leq \Cc{c} \snorm{k}$.
In the same way, we may deduce
\begin{equation}\label{EstDerivativeGammaH}
\snorml{D_x^\alpha \fundsolssrkrey(y)} \leq \Cc{c} \snorm{k}^{-1}\,\snorm{y}^{2-n-\snorm{\alpha}}
\end{equation}
for all $\snorm{y} \geq R/2$ and all $\alpha \in \N_0^n$.
To estimate $D_x^\alpha I_1$, we use integration by parts and employ polar coordinates to deduce
\begin{align*}
\snorml{D_x^\alpha I_1(x)} 
&\leq \Cc{c} 
\int_{B_R}\snorml{\partial_j\partial_i D_x^\alpha \fundsollaplace(x-y)}\, \snorml{\fundsolssrkrey(y)} +
\snorml{\partial_i D_x^\alpha \fundsollaplace(x-y)}\, \snorml{\fundsolssrkrey(y)}\, R^{-1}\,\dy \\
&\leq \Cc{c} \int_{B_R} R^{-n-\snorm{\alpha}}\, \snorml{\fundsolssrkrey(y)}\,\dy\\
&\leq \Cc{c} \int_{B_R} R^{-n-\snorm{\alpha}}\, \snorm{\alpha}^{\frac{n-2}{4}}\, \snorm{y}^{\frac{2-n}{2}}\, \snormL{\hankelone{\frac{n-2}{2}}\bp{\sqrt{-\alpha}\cdot \snorm{y}}} \, \e^{\frac{\snorm{\rey}}{2}\snorm{y}}\,\dy\\
&\leq \Cc{c} \int_0^R R^{-n-\snorm{\alpha}}\, \snorm{\alpha}^{\frac{n-2}{4}}\, r^{\frac{n}{2}}\, \snormL{\hankelone{\frac{n-2}{2}}\bp{\sqrt{-\alpha}\cdot r}} \, \e^{\frac{\snorm{\rey}}{2} r}\,\dr\\
&\leq \Cc{c} \int_0^\infty R^{-n-\snorm{\alpha}}\, \snorm{\alpha}^{-1}\, s^{\frac{n}{2}}\, \snormL{\hankelone{\frac{n-2}{2}}\bp{\sqrt{-\alpha}\cdot\snorm{\alpha}^{-\half} s}}\, \e^{\frac{\snorm{\rey}}{2} \snorm{\alpha}^{-1/2} s} \,\ds.
\end{align*}
Employing estimate \eqref{estHankelFktAtZero} in combination with \eqref{estHankelFktAtInfty}, in the case $n>2$ we obtain
\begin{align*}
\snorml{D_x^\alpha I_1(x)} 
&\leq \Cc{c}\,  R^{-n-\snorm{\alpha}}\, \snorm{\alpha}^{-1}\, \Bp{ \int_0^1 s \e^{\frac{\snorm{\rey}}{2} \snorm{\alpha}^{-1/2} s}\,\ds + 
\int_1^\infty s^{\frac{n-1}{2}} \e^{\bb{-\Im (\sqrt{-\alpha})+\frac{\snorm{\rey}}{2}}\snorm{\alpha}^{-1/2} s}\,\ds } \\
&\leq \Cc{c}\,  R^{-n-\snorm{\alpha}}\, \snorm{k}^{-1}\, \Bp{ \int_0^1 s \e^{s}\,\ds + 
\int_1^\infty s^{\frac{n}{2}}\, s^{-\half}\, \e^{-\const{Imalpha} \sqrt{\frac{\per}{2\pi}} s}\,\ds } \\
&\leq \Cc{c}\,  R^{-n-\snorm{\alpha}}\, \snorm{k}^{-1}. 
\end{align*}
When $n=2$, we use estimate \eqref{estHankelzeroFktAtZero} in combination with \eqref{estHankelFktAtInfty} and also obtain
\begin{align*}
\snorml{D_x^\alpha I_1(x)}
&\leq \Cc{c}  R^{-2-\snorm{\alpha}} \snorm{\alpha}^{-1} \Bp{ \int_0^1 s \snorml{\log (s)} \e^{\frac{\snorm{\rey}}{2} \snorm{\alpha}^{-1/2} s}\ds 
+\! \! \int_1^\infty \! s^{\half}\e^{\bb{-\Im (\sqrt{-\alpha})+\frac{\rey}{2}}\snorm{\alpha}^{-1/2} s}\ds \! } \\
&\leq \Cc{c}\,  R^{-2-\snorm{\alpha}}\, \snorm{k}^{-1}\, \Bp{ \int_0^1 s \snorml{\log (s)} \, \e^{s}\,\ds 
+ \int_1^\infty s^{\half}\,\e^{-\const{Imalpha} \sqrt{\frac{\per}{2\pi}} s}\,\ds } \\
&\leq \Cc{c}\,  R^{-2-\snorm{\alpha}}\, \snorm{k}^{-1}
\end{align*}
in this case.
In order to estimate $D_x^\alpha I_2$, we again utilize \eqref{EstDerivativeGammaH}, which directly yields
\[
\snorm{D_x^\alpha I_2(x)}
\leq \Cc{c} \int_{B^{3R}} \snorm{x-y}^{2-n-\snorm{\alpha}}\, \snorm{k}^{-1}\,\snorm{y}^{-(n+1)}\,\dy
\leq \Cc{c}\, \snorm{k}^{-1}\,R^{-n-\snorm{\alpha}}.
\]
For the estimate of $D_x^\alpha I_3$, at first we use \eqref{EstDerivativeGammaH} with $\snorm{\alpha}=0$ and obtain
\begin{align*}
\snorml{I_3(x)} 
&\leq \Cc{c} \int_{B_{4R,R/2}} \snorm{x-y}^{1-n}\, \snorm{k}^{-1}\,\snorm{y}^{-(n+1)}\,\dy \\
& \leq \Cc{c} \snorm{k}^{-1}\,R^{-(n+1)} \int_{B_{4R,R/2}} \snorm{x-y}^{1-n}\, \dy 
\leq \Cc{c}\, \snorm{k}^{-1}\,R^{-n}.
\end{align*}
In the case $\snorm{\alpha} \neq 0$, we can use integration by parts to deduce
\[
D_x^\alpha I_3(x) = (-1)^{\snorm{\alpha}} \int_{B_{4R,R/2}} \partial_i\fundsollaplace(x-y)\,\partial_j D_x^\alpha \bb{\fundsolssrkrey \cdot \cutoff_R}(y) \, \dy.
\]
Now \eqref{EstDerivativeGammaH} leads to the estimate
\begin{align*}
\snorm{D_x^\alpha I_3(x)}
&\leq \Cc{c} \sum_{\beta \leq \alpha+ e_j} \ \int_{B_{4R,R/2}} \snorm{x-y}^{1-n}\, \snorm{k}^{-1}\,\snorm{y}^{2-n-\snorm{\beta}} R^{-\snorm{\alpha - \beta}-1}\,\dy \\
& \leq \Cc{c} \snorm{k}^{-1}\,R^{-n-\snorm{\alpha}-1} \int_{B_{4R,R/2}} \snorm{x-y}^{1-n}\, \dy 
\leq \Cc{c}\, \snorm{k}^{-1}\,R^{-n-\snorm{\alpha}}.
\end{align*}
Since $\snorm{x}=2R$, we conclude \eqref{pointwiseEstLemEst1} by collecting the estimates for $D_x^\alpha I_1$, $D_x^\alpha I_2$ and $D_x^\alpha I_3$.
\end{proof}

The preparations above enable us to
establish pointwise estimates of the 
purely periodic part of the fundamental solutions to the time-periodic 
Stokes and Oseen problem. We can thereby finalize the proof of Theorem \ref{MainThm}.

\begin{proof}[Proof of Theorem \ref{MainThm}]
At first we remark that the function
\begin{align}\label{EqDefMultiplier}
M \colon \dualgrp \to \C, \quad M(k, \xi) \coloneqq \frac{1-\delta_\Z(k)}{\snorm{\xi}^2 + i\bp{\perf k + \rey \xi_1}} 
\end{align}
is an element of $\LR{\infty}(\grp)$. Thus \eqref{MainThm_deffundsolcompl} is a well-defined object in $\TDR(G)^{n \times n}$. Since it holds $\FT_{\R^n}[\fundsolpres]= -i \frac{\xi}{\snorm{\xi}^2}$, we obtain
\[
\FT_G[ \grad \fundsoltppres] = \FT_{\R^n} [ \grad \fundsolpres] \otimes \FT_\torus[\delta_\torus] = \frac{\xi \otimes \xi}{\snorm{\xi}^2} \cdot 1_\Z.
\]
Because we also have $\bp{\snorm{\xi}^2 + i \rey \xi_1} \cdot \FT_{\R^n}[\fundsolvel] = \bp{\idmatrix - \frac{\xi \otimes \xi}{\snorm{\xi}^2}}$, we further deduce
\[
\Bp{\snorm{\xi}^2+ i \bp{\perf k + \rey \xi_1}} \cdot \FT_G[\fundsolvel \otimes 1_\torus] = \Bp{\idmatrix - \frac{\xi \otimes \xi}{\snorm{\xi}^2}} \delta_\Z(k),
\]
which finally leads us to
\[
\Bp{\snorm{\xi}^2+ i \bp{\perf k + \rey \xi_1}} \cdot \FT_G[\fundsoltpvel ] + \FT_G [\grad \fundsoltppres] = \idmatrix.
\]
Applying the inverse Fourier transform to this equality, we conclude that $(\fundsoltpvel, \fundsoltppres)$ is, in fact, a fundamental solution to \eqref{tpFundsolEq} since clearly $\Div \fundsolvel= \Div \fundsolcompl=0$.

We continue with the derivation of \eqref{MainThm_ComplPointwiseEst}, for which we will utilize Lemma \ref{pointwiseEstLem}. 
The decay rate established for $\fundsolssrkrey$ in Lemma \ref{lemfunsolssrkreyDistr} implies that 
$\fundsollaplace \ast \fundsolssrkrey$ (defined in  \eqref{defoffundsollaplaceCONVLfundsolssrk}) is a tempered distribution on $\R^n$. Therefore, we may apply the Fourier transform to the derivatives of this distribution. Then identity \eqref{fundsolssrkreyMultiplier} yields
\[
\FT_{\R^n} \bb{\partial_j \partial_l [\fundsollaplace \ast \fundsolssrkrey]}(\xi) 
= \frac{\xi_j \xi_l}{\snorm{\xi}^2} \frac{1}{\snorm{\xi}^2 + i \bp{\perf k+\rey \xi_1}}
\]
and, in particular, 
\[
\FT_{\R^n} \bb{\Delta [\fundsollaplace \ast \fundsolssrkrey]}(\xi) 
= \frac{1}{\snorm{\xi}^2 + i \bp{\perf k+\rey \xi_1}}.
\]
Hence definition \eqref{MainThm_deffundsolcompl} yields
\[
D_x^\alpha \fundsolcompl_{jl} = \iFT_\torus \bb{ \bp{1-\delta_\Z(k)} \bb{\delta_{jl}\Delta - \partial_j \partial_l } D_x^\alpha [\fundsollaplace \ast \fundsolssrkrey]}.
\]
We may now utilize Lemma \ref{pointwiseEstLem}. For $r \in [2, \infty)$ with H\"older conjugate $r'$, the Hausdorff-Young inequality in combination with estimate \eqref{pointwiseEstLemEst1} yields
\begin{align*}
\norm{\fundsolcompl_{jl}(\cdot, x)}_{\LR{r}(\torus)} & \leq \Bp{\sum_{k \in \Z} \snormL{\bp{1-\delta_\Z(k)} \cdot \bb{\delta_{jl} \Delta - \partial_j \partial_l} D_x^\alpha[\fundsollaplace \ast \fundsolssrkrey](x)}^{r'}}^\frac{1}{r'} \\
& \leq \Cc{c} \snorm{x}^{-n-\snorm{\alpha}} \Bp{\sum_{k \in \Z \setminus \set{0}} \snorm{k}^{-r'}}^\frac{1}{r'} \leq \Cc{c} \snorm{x}^{-n-\snorm{\alpha}},
\end{align*}
which finally implies \eqref{MainThm_ComplPointwiseEst}.

Now we go on with the derivation of \eqref{MainThm_ComplSummability}, for which we generalize the approach from \cite[Proof of Lemma 4.5]{KyedGaldi_TPSolNS3d} to arbitrary dimension $n\geq 2$.
Equation \eqref{MainThm_deffundsolcompl} leads to the representation
\[
\fundsolcompl_{jl}
= \bb{\delta_{jl} \bp{\riesztrans_h \riesztrans_h} - \riesztrans_j \riesztrans_l } \circ \iFT_G \Bb{ M_0 \cdot \FT_G \bb{\iFT_G (\calk)} },
\]
where $\riesztrans_j$ denotes the Riesz transform 
\[
\riesztrans_j \colon \SR(G) \to \TDR(G), \quad \riesztrans_j(f) \coloneqq \iFT_{\R^n} \Bb{ \frac{\xi_j}{\snorm{\xi}} \FT_{\R^n}(f) } 
\] 
and
\[
M_0 \colon \dualgrp \to \C, \quad 
M_0(k, \xi) \coloneqq \frac{\bp{1-\delta_\Z(k)} \snorm{k}^{\frac{2}{n+2}}\bp{1+\snorm{\xi}^2}^\frac{n}{n+2}}{\snorm{\xi}^2 + i \bp{\perf k + \rey \xi_1} }
\]
as well as
\[
\calk \colon \dualgrp \to \C, \quad
\calk(k, \xi) \coloneqq \bp{1-\delta_\Z(k)} \snorm{k}^{-\frac{2}{n+2}} \bp{1+ \snorm{\xi}^2}^{-\frac{n}{n+2}}.
\]
It is well known that $\riesztrans_j$ extends to a bounded linear operator $\LR{q}(G) \to \LR{q}(G)$ for all $q \in (1, \infty)$; see for example \cite[Corollary 4.2.8]{Grafakos1}. 

In order to obtain that $M_0$ is an $\LR{q}(G)$ multiplier,
we adapt the method applied in \cite[Proof of Theorem 4.8]{mrtpns}. To do so, we consider a function $\cutoff \in \CRci(\R;\R)$ with 
\begin{align*}
\cutoff(\eta)=
\begin{cases}
0 & \text{for } \snorm{\eta} \leq \half,\\
1 & \text{for } \snorm{\eta} \geq 1,
\end{cases}
\end{align*}
and define 
\[
m_0 \colon \R \times \R^n \to \C, \quad 
m_0(\eta, \xi) \coloneqq \frac{\chi(\eta) \snorm{\eta}^{\frac{2}{n+2}}\bp{1+\snorm{\xi}^2}^\frac{n}{n+2}}{\snorm{\xi}^2 + i \bp{\perf \eta + \rey \xi_1} },
\]
and the natural embedding
\[
\pi \colon \dualgrp \to \R \times \R^n, \quad \pi(k,\xi)\coloneqq \bp{k, \xi}.
\]
We remark that $m_0$ is a continuous and bounded function since the numerator vanishes in a neighborhood of $0$, which is the only zero of the denominator. With the help of Marcinkiewicz's Multiplier Theorem (see for instance \cite[Corollary 5.2.5]{Grafakos1}), one readily verifies that $m_0$ is an $\LR{q}(\R \times \R^n)$ multiplier for all $q \in (1, \infty)$. Additionally, we have $M_0 = m_0 \circ \pi$. Thus an application of the Transference Principle, see \cite[Theorem B.2.1]{EdwardsGaudryBook}, implies that $M_0$ is an $\LR{q}(G)$ multiplier for all $q \in (1, \infty)$. Hence we obtain \eqref{MainThm_deffundsolcompl} if we can show $\iFT_\grp(\calk) \in \LR{r}(G)$.

If we identify $\torus$ with the interval $(- \half \per, \half \per ]$, we obtain
\[
\iFT_{\torus} \Bb{ \bp{1-\delta_\Z(k)} \snorm{k}^{-\frac{2}{n+2}}} (t) = \Cc{c}\snorm{t}^{-\frac{n}{n+2}} + h(t)
\]
for some function $h \in \CR{\infty}(\R / \per \Z)$; see for instance \cite[Example 3.1.19]{Grafakos1}. Furthermore, one can derive the estimate
\[
\snormL{ \iFT_{\R^n} \Bb{ \bp{1+\snorm{\xi}^2}^{-\frac{n}{n+2}}}(x)} 
\leq \Cc{c} \Bp{ \snorm{x}^{-\frac{n^2}{n+2}} \chi_{B_2}(x) + \e^{-\frac{\snorm{x}}{2}}};
\]
see for example \cite[Proposition 6.1.5]{Grafakos2}. Therefore, we conclude
\[
\iFT_G(\calk) = \iFT_{\torus} \Bb{ \bp{1-\delta_\Z(k)} \snorm{k}^{-\frac{2}{n+2}}} 
\otimes \iFT_{\R^n} \Bb{ \bp{1+\snorm{\xi}^2}^{-\frac{n}{n+2}}} \in \LR{r}(G)
\]
for all $r \in \bp{1, \frac{n+2}{n}}$, and we have verified \eqref{MainThm_ComplSummability}. In order to show \eqref{MainThm_ComplSummabilityGradient}, we proceed in a similar way. We consider the identity
\[
\partial_m \fundsolcompl_{jl}
= \bb{\delta_{jl} \bp{\riesztrans_h \riesztrans_h} - \riesztrans_j \riesztrans_l } \circ \iFT_G \Bb{ M_m \cdot \FT_G \bb{\iFT_G (\calj)} },
\]
where
\[
M_m \colon \dualgrp \to \C, \quad 
M_m(k, \xi) \coloneqq \frac{\bp{1-\delta_\Z(k)} \snorm{k}^{\frac{1}{n+2}}\bp{1+\snorm{\xi}^2}^\frac{n}{2(n+2)} \, i\xi_m}{\snorm{\xi}^2 + i \bp{\perf k + \rey \xi_1} }
\]
and
\[
\calj \colon \dualgrp \to \C, \quad
\calj(k, \xi) \coloneqq \bp{1-\delta_\Z(k)} \snorm{k}^{-\frac{1}{n+2}} \bp{1+ \snorm{\xi}^2}^{-\frac{n}{2(n+2)}}.
\]
With the same arguments as above, we conclude $\partial_m \fundsolcompl \in \LR{r}(G)$ for all $r \in \bp{1, \frac{n+2}{n+1}}$.
In particular, this yields $\partial_m \fundsolcompl \in \LRloc{1}(G)$, which finally leads to $\partial_m \fundsolcompl \in \LR{1}(G)$ by the asymptotic behavior from \eqref{MainThm_ComplPointwiseEst} for $\snorm{\alpha} = 1$. Consequently, we have also shown \eqref{MainThm_ComplSummabilityGradient}.

The convolution $\fundsolcompl*f$ can be expressed in terms of a Fourier multiplier
\begin{align*}
\fundsolcompl*f = \iFT_\grp\Bb{\Mmultiplier(k,\xi)\Bp{\idmatrix-\frac{\xi\otimes\xi}{\snorm{\xi}^2}} \FT_\grp\nb{f}},
\end{align*}
with $M$ given by \eqref{EqDefMultiplier}. As already mentioned, $M\in\LR{\infty}(\dualgrp)$. As one may verify, also the functions $(k,\xi) \mapsto -\xi_j \xi_l \cdot M(k,\xi)$ and $(k,\xi) \mapsto ik \cdot M(k,\xi)$ are bounded.
Based on this information, \eqref{MainThm_ConvlFundsolcomplLqEst} can be established from the theory on Fourier multipliers in a similar way as above.
\end{proof}

\bibliographystyle{abbrv}

\end{document}